\newtheorem{theorem}{Theorem}
\newtheorem{lemma}[theorem]{Lemma}
\newtheorem{prop}[theorem]{Proposition}
\newtheorem{corollary}[theorem]{Corollary}
\theoremstyle{remark}
\newtheorem{example}[theorem]{Example}
\newtheorem{remark}[theorem]{Remark}
\newcommand{\ZZ}{\mathbb{Z}}
\newcommand{\PP}{\mathbb{P}}
\renewcommand{\AA}{\mathbb{A}}
\newcommand{\dsum}{D}
\newcommand{\fk}{K}
\DeclareMathOperator{\lt}{lt}
\title{Polynomial systems admitting a simultaneous solution}
\author[A. Conner]{Austin Conner}
\address[A. Conner]{Department of Mathematics and Statistics, University of Konstanz, 
% Dept.~of Mathematics and Statistics, 
% Universit\"atsstr.~10, 78457 Konstanz, 
Germany, and Department of Mathematics, Harvard University, USA}
\email{aconner.vu@gmail.com}
\author[M.~Micha{\l}ek]{Mateusz Micha{\l}ek}
\address[M.~Micha{\l}ek]{Department of Mathematics and Statistics, University of Konstanz,
%, Universit\"atsstr.~10, 78457 Konstanz, 
Germany}
\email{mateusz.michalek@uni-konstanz.de}
\author[M.~Schindler]{Michael Schindler}
\address[M.~Schindler]{CNRS~UMR7083, ESPCI~Paris, Université~PSL, %10~rue~Vauquelin, 75005 Paris, 
France}
\email{michael.schindler@espci.fr}
\author[B.~Szendr\H oi]{Bal\' azs Szendr\H oi}
\address[B.~Szendr\H oi]{Faculty of Mathematics, University of Vienna, %Oscar-Morgenstein-Platz~1, 1090 Vienna, 
Austria}
\email{balazs.szendroi@univie.ac.at}
\date{January 2024}
\begin{document}

\begin{abstract}
We provide a description of a complete set of generators for the ideal that serves as the resultant ideal for $n$ univariate polynomials of degree $d$. Our generators arise as maximal minors of a set of cascading matrices formed from the coefficients of the polynomials, generalising the classical Sylvester resultant of two polynomials. 
\end{abstract}
\maketitle
\section{Introduction}
Fix integers $n>1$ and $d>1$. Consider a system 
\begin{equation}\label{eq:system}
  f_i(x) := a_{i,0} x^d + a_{i,1} x^{d-1} + \cdots + a_{i,d} = 0 \quad \text{ \ $1\le i\le n$}
\end{equation}
of $n$ univariate polynomials of degree $d$ in a variable $x$ over an algebraically closed field $\fk$. 
A natural question arises: 
\emph{when do the polynomials~$f_i$ have a common root?}

By eliminating the variable $x$ from the ideal $\langle f_i(x)\rangle$, we obtain a radical ideal 
\[I_{d,n}\lhd  \fk[a_{i,j}]_{1\leq i\leq n, 0\leq j\leq d}\]
in the polynomial ring of coefficients, which serves as a resultant for the set of polynomials $\{f_i(x)\}$ in the following sense. 
\begin{enumerate}
     \item[(a)] If the polynomials $f_i(x)$ have a common root, then the coefficients $a_{i,j}$ belong to the variety $V(I_{d,n})$.
     \item[(b)] If the coefficients $a_{i,j}$ belong to the variety $V(I_{d,n})$, then either the polynomials $f_i(x)$ have a common root, or for all $i$ we have $a_{i,0}=0$.
\end{enumerate}
%The condition that ``the polynomials $f_i(x)$ have a common root, or for all $i$ we have $a_{i,0}=0$'' 
The conclusion in (b) simply means that the associated binary forms 
\[g_i(x,y):=a_{i,0} x^d + a_{i,1} x^{d-1}y + \cdots + a_{i,d}y^d\]
have a common root in $\PP^1_{[x:y]}$. This is a Zariski closed condition in the projective space defined by the coefficients $a_{i,j}$, and is the closure of the condition that the polynomials $f_i(x)$ have a common root in $\AA^1_x$. Thus the ideal $I_{d,n}$ is the fundamental object providing the answer to our basic question; we will call it the {\em resultant ideal} of the polynomial system~\eqref{eq:system}, following the terminology in \cite{jouanolou}.

In this paper, we give a description of a complete set of generators of the ideal~$I_{d,n}$. Aspects of this very natural and classical problem have been investigated since the 19th century. The best known is the case $n=2$ of two equations. As observed by Sylvester, the ideal $I_{d,2}$ is principal, generated by the resultant polynomial ${\rm Res}(f_1,f_2)\in \fk[a_{i,j}]$, the determinant of a matrix known nowadays as the Sylvester matrix. For the next case $d=2, n=3$, the ideal $I_{2,3}$ is easily computed (at least by computer algebra), and was studied earlier in~\cite{tropical}*{Ex.\;5.6, Ex.\;6.6}. For small, fixed $n>2$ and $d$, one can still give an explicit set of generators for~$I_{d,n}$ via elimination. However, this quickly becomes impossible, and the answer intractable. 

The ideal~$I_{d,n}$, as well as the variety~$V(I_{d,n})$, the locus of forms that have a common root, have also been studied from a theoretical point of view. A classical reference is van der Waerden~\cite[\S 130]{vdWaerden}, where some of the properties of $I_{d,n}$ are described. However, in all modern editions, the short arguments involving this ideal are nonconstructive, only appearing as corollaries of the Nullstellensatz; older versions made more explicit use of the theory of resultants. 

A question closely related to ours was already answered a long time ago by Orsinger~\cite[Satz 7]{orsinger}, though this does not appear to be generally known~\cite{MOQ}, even for the case of quadratic polynomials. This is the set-theoretic question of giving polynomial conditions for the coefficients $a_{ij}$ that guarantee the existence of a common root. Orsinger's result was rediscovered by Kaki\'{e}~\cite{kakie} and also in greater generality by Jouanolou~\cite[Section 3.3.7]{jouanolou}. The question of finding the minimal number of polynomial conditions ensuring a common root was investigated by Lyubeznik~\cite{Lub}. However, these results do not approach the problem in an ideal-theoretic sense: the polynomials they provide do not generate the resultant ideal~$I_{d,n}$, for simple degree reasons. What they generate instead is a non-radical ideal with radical $I_{d,n}$. This phenomenon already occurs for quadratic polynomials, where for $n>2$ a natural set of generators for $I_{2,n}$ include classical resultant quartics, some further degree-$4$ relations already contained in~\cites{orsinger, kakie}, as well as cubic relations. Jouanolou~\cite{jouanolou} discusses many further properties of the ideal $I_{d,n}$.  However, to our knowledge, the explicit description of a generating set for the ideal $I_{d,n}$ was not known before our work. In particular, as observed by Jan Stevens in private communication, our results imply that the determinantal equations considered by Orsinger and Kaki\'e define the correct projective scheme, although they do not generate the correct ideal. See Remark~\ref{rem_sylv}, Corollary~\ref{cor_kakie} and Proposition~\ref{prop:sat} for further discussion.  

In this article we provide a description of the resultant ideal $I_{d,n}$ in the following sense:  we provide
\begin{enumerate}
    \item[(a)] a list of generators, in determinantal form, for $I_{d,n}$;
    \item[(b)] a Gr\"obner basis for $I_{d,n}$;
    \item[(c)] the degree and the dimension of the variety $X_{d,n} := V(I_{d,n}) \subset \PP^{n(d+1)-1}$. 
\end{enumerate}

Here (c) is straightforward, using a natural resolution of singularities of $X_{d,n}$ via vector bundles, the subject of our Section~\ref{sect_deg}. At the start of Section~\ref{sect_main}, we provide a set of determinantal elements in the resultant ideal. Our strategy to solve (a)-(b), and in particular to prove our main result Theorem~\ref{thm:minors}, is as follows. First, we will pick a term order on the polynomial ring of coefficients, and a subset $G\subset I_{d,n}$, which will eventually be shown to be a Gr\"obner basis. We will show that the leading terms of $G$ are square-free and that the variety defined by the corresponding initial ideal is the union of $\deg X_{d,n}$ coordinate
subspaces and of dimension equal to $\dim X_{d,n}$. As we will argue, these facts establish that $G$ is a Gr\"obner basis of $I_{d,n}$. We conclude the paper in Section~\ref{sect_conc} with final remarks, in particular recovering the set-theoretic description as a special case. 

In Section~\ref{sect_deg}, we will be working in a more general setting, where the polynomials in the system~\eqref{eq:system} can have different degrees. However, from Section~\ref{sect_main}, we focus on the case of polynomials of equal degree $d$. We hope to return to the more general case in later work. 

We will be assuming familiarity with ideals, Gr\"obner bases, term orders, elimination theory and the Nullstellensatz, as presented in~\cite{MS}*{Chapters 1--4 and~6}.  We also rely on basic intersection theory, referring the interested reader to~\cite{3264}.
%Indeed, if it were not, by the Nullstellensatz and the fact that $\lt G$ is radical, $V(\lt I_{d,n})$ must be strictly contained in $V(\lt G)$, and in particular must have either strictly smaller dimension than $\dim X_{d,n}$ or have strictly smaller degree than $\deg X_{d,n}$. However, $X_{d,n}$ degenerates to $V(\lt I_{d,n})$, and in particular they have the same dimension and degree, which gives us our contradiction. IS THIS REALLY NEEDED HERE? PERHAPS TOO MUCH DETAIL FOR THE INTRO. B

\section*{Acknowledgements} A.C. was~supported by NSF grant 2002149 and DFG grant 467575307. M.M. was supported by DFG grant 467575307. We would like to thank Bernd Sturmfels for many inspiring talks on the topic, and for some influential comments on early versions of our result. We are very grateful to Jan Stevens for pointing out several important classical results and sources in the field, as well as Proposition~\ref{prop:sat}. We thank Elke Neuhaus for remarks about the first version of the article. The last-named author would also like to thank Jerzy Weyman for a conversation on this subject. 

\section{Dimension and degree}\label{sect_deg}
We start by computing the dimension and degree of the projective variety
\[X_{d,n} =V(I_{d,n}) \subset \PP^{n(d+1)-1},\] 
the vanishing locus of the resultant ideal, defined in the Introduction. Note that different proofs of these results were given in~\cite[Lem.~6.3, Prop.~6.5]{tropical}.

Let us slightly generalize the setting: consider $n$ bivariate, homogeneous forms
\[g_i(x,y):=a_{i,0} x^{d_i} + a_{i,1} x^{d_i-1}y + \cdots + a_{i,d_i}y^{d_i}, \ \ i=1,\dots,n\]
of possibly distinct degrees $d_1,\dots,d_n$. Denoting $\dsum:=\sum_{i=1}^n d_i$, the space of such forms is parameterized by the affine space $\fk^{n+\dsum}$ of coefficients $a_{i,j}$.   
Let $X_{d_1,\dots,d_n}\subset \PP^{n-1+\dsum}$ be the locus inside the projective space of coefficients corresponding to those $n$-tuples of forms that have a common root in $\PP^1$. 
\begin{prop}
The set $X_{d_1,\dots,d_n}\subset \PP^{n-1+\dsum}$ is an irreducible projective variety of dimension $\dsum$ and degree $\dsum$.
\end{prop}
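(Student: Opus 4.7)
The plan is to realize $X_{d_1,\dots,d_n}$ as the image of a natural projective bundle over $\PP^1$. I would introduce the incidence variety
$$Y = \{([x:y], (g_1,\dots,g_n)) \in \PP^1 \times \PP^{n-1+\dsum} : g_i(x,y)=0 \text{ for all } i\}$$
together with its two projections. The second projection $\pi$ has image $X_{d_1,\dots,d_n}$ by construction, while the first projection will exhibit $Y$ as a projective bundle from which dimension and intersection numbers can be read off.

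For the first projection, fixing $[x:y]\in\PP^1$ turns each condition $g_i(x,y)=0$ into a single linear equation on the coefficients of $g_i$; as $[x:y]$ varies, these glue into a vector bundle $\mathcal{E}=\bigoplus_{i=1}^n \mathcal{E}_i$ on $\PP^1$, where $\mathcal{E}_i$ is the kernel of the surjective evaluation map $H^0(\PP^1,\mathcal{O}(d_i))\otimes\mathcal{O}_{\PP^1}\to\mathcal{O}(d_i)$. The defining exact sequence yields $\mathrm{rk}(\mathcal{E}_i)=d_i$ and $c_1(\mathcal{E}_i)=-d_i$, so $\mathcal{E}$ has rank $\dsum$ and $c_1(\mathcal{E})=-\dsum\, H$, where $H$ is the hyperplane class on $\PP^1$. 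Therefore $Y=\PP(\mathcal{E})$ is irreducible of dimension $\dsum$, and its image $X_{d_1,\dots,d_n}$ is likewise irreducible.

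Next I would argue that $\pi$ is birational by showing that a generic tuple in $X_{d_1,\dots,d_n}$ has exactly one common root: an analogous incidence construction over $(\PP^1\times\PP^1)\setminus\Delta$ yields a family of dimension at most $\dsum+1-n<\dsum$ (as $n>1$), so tuples sharing two distinct roots form a proper subvariety. Consequently $\dim X_{d_1,\dots,d_n}=\dsum$ and $\deg X_{d_1,\dots,d_n}=\int_Y \xi^{\dsum}$, where $\xi=c_1(\mathcal{O}_{\PP(\mathcal{E})}(1))$ coincides with the pullback of the hyperplane class of $\PP^{n-1+\dsum}$ under the embedding $\PP(\mathcal{E})\hookrightarrow\PP^1\times\PP^{n-1+\dsum}$. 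Combining the Grothendieck relation on $\PP(\mathcal{E})$ with $H^2=0$ on $\PP^1$ yields $\xi^{\dsum}=\dsum\, H\,\xi^{\dsum-1}$, and $\int_Y H\xi^{\dsum-1}=1$ by restriction to a single fiber of $Y\to\PP^1$; so $\deg X_{d_1,\dots,d_n}=\dsum$.

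The step warranting the most care is the birationality of $\pi$, since the above Chern-class computation delivers the correct degree only once one knows the generic fiber of $\pi$ is a single reduced point. The remaining bookkeeping is routine because the Chow ring of $\PP^1$ collapses thanks to $H^2=0$, killing all higher Chern classes of $\mathcal{E}$ in the Grothendieck relation.
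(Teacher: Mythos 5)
Your proposal follows essentially the same route as the paper: both construct the incidence variety $Y\subset \PP^1\times\PP^{n-1+D}$, identify it as a projective bundle over $\PP^1$ (the paper via transversality of the hypersurfaces $B_i=V(g_i)$, you via $\PP(\mathcal{E})$ with $\mathcal{E}=\bigoplus_i\ker(H^0(\mathcal O(d_i))\otimes\mathcal O\to\mathcal O(d_i))$), observe that the projection $\pi\colon Y\to X$ is birational, and then compute $\deg X=\int_Y H_2^D$. The only deviations are cosmetic: you make the birationality step a touch more explicit via a dimension count on the two-root incidence locus, whereas the paper appeals to a single example plus openness; and you evaluate the degree intrinsically through the Grothendieck relation $\xi^D=D\,H\,\xi^{D-1}$ rather than by expanding the complete-intersection class $[Y]=\prod_i(d_iH_1+H_2)$ in $A^\ast(\PP^1\times\PP^{n-1+D})$ — these are equivalent calculations.
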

\begin{proof}
Consider the projective line $\PP^1$ with coordinates $x,y$. Inside $\PP^1\times \PP^{n-1+\dsum}$, each binary form $g_i(x,y)$ defines a codimension one subvariety $B_i$. Projecting $B_i$ to $\PP^1$ makes $B_i$ into a projective bundle of rank $n-2+\dsum$ over $\PP^1$, a codimension one subbundle of the trivial bundle. 

We now prove that all $B_i$'s intersect transversally. Indeed, as bundles are locally trivial, they intersect transversally if and only if they intersect transversally on every fiber. However, for fixed $[x:y]$ each $g_i$ becomes a linear equation in a \emph{distinct} set of variables. In particular, the linear equations are independent and thus the intersection is transversal. It follows that the variety $Y_{d_1,\dots,d_n}:=\bigcap_{i=1}^n B_i$ is also a projective bundle over $\PP^1$ of rank $(n-1+\dsum)-n=\dsum-1$. Hence, $\dim Y_{d_1,\dots,d_n}=\dsum$. 

Consider the projection $\PP^1\times \PP^{n-1+\dsum}\rightarrow \PP^{n-1+\dsum}$. We claim that the image of $Y_{d_1,\dots,d_n}$ is precisely $X_{d_1,\dots,d_n}$. Indeed, a point $([x:y],[a_{i,j}])$ belongs to $Y_{d_1,\dots,d_n}$ if and only if $[x:y]$ is a common root of the binary forms $g_i(x,y)$. In particular, $X_{d_1,\dots,d_n}$ is an irreducible variety. Further, 
% we claim that 
the resulting map \[\pi\colon Y_{d_1,\dots,d_n}\to X_{d_1,\dots,d_n}\] is birational~\cite[Prop.3.3.1]{jouanolou}. Indeed, the general fiber is a singleton, as, for general $g_i(x,y)$ having a common root, this root is unique. 
%\footnote{It is easy to exhibit one case when the equations have exactly one common root and notice that this remains true in a neighbourhood.}
Thus, $\dim X_{d_1,\dots,d_n}=\dim Y_{d_1,\dots,d_n}=D$.

As a side remark, we note that $\pi$ is not an isomorphism, as some systems have several common solutions. In particular, $X_{d_1,\dots,d_n}$ in general is singular, while $Y_{d_1,\dots,d_n}$ is always smooth, with $\pi$ a resolution of singularities of $X_{d_1,\dots,d_n}$.

Recall that the degree of $X_{d_1,\dots,d_n}$ is the number of points we obtain after intersecting it with $\dsum$ general hyperplanes in $\PP^{n-1+\dsum}$.
%We note that $\deg X_{d_1,\dots,d_n}$ equals the last entry of the multidegree of $Y_{d_1,\dots,d_n}$. 
Pulling back hyperplanes of $\PP^{n-1+\dsum}$ by the projection map, we obtain divisors on $\PP^1\times \PP^{n-1+\dsum}$ that belong to a base-point-free linear system ${\mathcal O}_{\PP^1\times \PP^{n-1+\dsum}}(H_2)$, the pullback of the hyperplane system on the second factor. Intersecting $Y_{d,n}$ with $\dsum$ general divisors from this linear system, by Bertini's theorem we obtain a finite number $k$ of reduced points of $Y$ that are general in the sense that they belong to the open complement $Y_{d_1,\dots,d_n}\setminus{\rm Exc}(\pi)$ of the exceptional locus of $\pi$. Let us note that as the intersection points belong to the locus where $Y_{d_1,\dots,d_n}$ and $X_{d_1,\dots,d_n}$ are isomorphic, to know that we obtain reduced points, it is enough to apply Bertini's theorem for the complete linear system of hyperplanes in the projective space, which holds in arbitrary characteristic of the field. 
It follows that $k=\deg X_{d_1,\dots,d_n}$.

It remains to compute the number of points we obtain by intersecting $Y_{d_1,\dots,d_n}$ with $\dsum$ divisors of the linear system ${\mathcal O}_{\PP^1\times \PP^{n-1+\dsum}}(H_2)$. Recall that the Chow ring of $ \PP^1\times \PP^{n-1+\dsum}$ is $R=\ZZ[H_1,H_2]/(H_1^2, H_2^{n+\dsum})$, with $H_i$ the hyperplane class pulled back from each factor, the class of the point being the top nonzero intersection $H_1H_2^{n-1+\dsum}$. 
%Here, $H_2$ is the divisor as above and $H_1$ is (a point in $\PP^1$) times $\PP^{n-1+\dsum}$. 

Each divisor $B_i$ is of degree $d_i$ in $x,y$ and degree $1$ in the coefficient variables $a_{i,j}$. Its class is thus $d_iH_1+H_2\in R$. As we proved that $Y_{d_1,\dots,d_n}$ is a transversal intersection of the hypersurfaces $B_i$, its class is the product $\prod_{i=1}^n(d_iH_1+H_2)\in R$. It remains to compute the intersection with $\dsum$ divisors of class $H_2$, which are general, hence transversal by Bertini's theorem, to deduce
\[H_2^{\dsum}\prod_{i=1}^n(d_iH_1+H_2)=\dsum \cdot H_1H_2^{n-1+\dsum}\in R,\]
and thus $k=D$.
\end{proof}

\begin{corollary}\label{cor:degdimdn} The projective variety $X_{d,n} \subset \PP^{n(d+1)-1}$ is irreducible of dimension $\dim X_{d,n} = nd$ and degree $\deg X_{d,n} = nd$.
\end{corollary}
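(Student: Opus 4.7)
The corollary is the specialization of the preceding proposition to the equal-degree case, so the plan is essentially to read off the statement. Specifically, I would set $d_1 = d_2 = \cdots = d_n = d$, so that $\dsum = \sum_{i=1}^n d_i = nd$, and observe that the ambient projective space in the proposition becomes $\PP^{n-1+\dsum} = \PP^{n-1+nd} = \PP^{n(d+1)-1}$, matching the ambient space of $X_{d,n}$ in the statement of the corollary.

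Then the proposition directly gives $\dim X_{d,n} = \dsum = nd$ and $\deg X_{d,n} = \dsum = nd$. The only sanity check to perform is that the variety $X_{d,d,\dots,d}$ defined in the previous section (the closure of the locus of coefficients giving a common projective root of the homogenized forms $g_i$) agrees with the variety $X_{d,n} = V(I_{d,n})$ defined in the introduction, which follows from properties (a) and (b) of $I_{d,n}$ stated there together with irreducibility of $X_{d,d,\dots,d}$ proved in the proposition.

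There is no real obstacle — the work has been done in the proposition; the corollary simply records the numerical consequence for the uniform-degree system that will be the focus of the remainder of the paper.
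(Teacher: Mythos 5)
Your proposal is correct and matches the paper, which states the corollary without separate proof as the immediate specialization $d_1=\cdots=d_n=d$ of the preceding proposition, so that $\dsum=nd$ and $\PP^{n-1+\dsum}=\PP^{n(d+1)-1}$. Your extra remark identifying $X_{d,\dots,d}$ with $V(I_{d,n})$ via properties (a) and (b) from the introduction is a reasonable (and implicit in the paper) sanity check, not a deviation.
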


\begin{remark} As argued above, $Y_{d,n}\subset\PP^{n(d+1)-1}\times\PP^1$ is a complete intersection, and hence its ideal can be resolved by the Koszul complex~\cite[(1.7)]{jouanolou}. Pushing forward this resolution along the map $\pi$, together with a standard computation\footnote{We would like to thank Jerzy Weyman for explaining this.}, shows that $X_{d,n}$ is not normal. A full resolution of the ideal of the embedding $X_{d,n} \subset \PP^{n(d+1)-1}$ is studied in~\cite[Section 4]{jouanolou}. 
In that setting, a certain module $\tilde \Gamma$ surjects onto the ideal of $X_{d,n}$. Thus knowing the generators of~$\tilde \Gamma$ would provide generators of the ideal 
of~$X_{d,n}$.
The module $\tilde \Gamma$ may be realized as a kernel of a map in a certain degree in the dual of a Koszul complex (see \cite[Remarque 4.5.3]{jouanolou}). As Jouanolou writes, thanks to the results of Hermann and Hilbert, this allows one in principle to obtain information about the generators of~$\tilde \Gamma$ and hence about the generators of the ideal of~$X_{d,n}$. On the other hand, computing a set of generators of this kernel is a very hard task, although algorithmically doable.   
The relationship between Jouanolou's resolution and our main result Theorem~\ref{thm:minors} below deserves further study.  
\end{remark}

\section{Determinantal equations and the main result}\label{sect_main}

By definition,~$I_{d,n}$ is the radical ideal defining the irreducible variety~$X_{d,n} \subset \PP^{n(d+1)-1}$, and hence prime (see also~\cite[3.3.7]{jouanolou}). Our next step is to construct determinantal equations in $I_{d,n}$. For $1\leq k \leq d$, define the $nk \times (d+k)$ matrix 
% these matrices have a natural linear map interpretetion, could mention this
\[ M_k = 
  \begin{bmatrix}
  a_{1,0} & \cdots & a_{1,d} & & & \\
          & \vdots &         & & &  &   \\
  a_{n,0} & \cdots & a_{n,d} & & & & \\
    & a_{1,0} & \cdots & a_{1,d}  & & \\
    &         & \vdots &         & & \\
    & a_{n,0} & \cdots & a_{n,d} & & \\
    &         &  & \ddots & & &  \\
    & & & a_{1,0} & \cdots & a_{1,d} \\
    &        &           & \vdots & \\
    & & & a_{n,0} & \cdots & a_{n,d} \\
\end{bmatrix},
\]
where each rectangular matrix is shifted to the right by one step corresponding to the rectangle above it, there
are a total of $k$ rectangles, and the unspecified entries are all zero.
%MM:We could piont out here Sylvester, or maybe even earlier in intro
If $x$ is a common solution to the system \eqref{eq:system}, then $[x^{d+k-1}, x^{d+k-2},
\ldots, 1]^t$ lies in the kernel of $M_k$, hence all $(d+k) \times (d+k)$ minors of
$M_k$ lie in $I_{d,n}$. Alternatively, writing $P_s$ for the vector space of univariate
polynomials of degree at most $s$, $M_k^t$ can be interpreted as the map $
(P_k)^{\times n} \to P_{d+k} $, $ (h_1,\ldots,h_n) \mapsto f_1h_1+\cdots +
f_nh_n$. This map is rank deficient when $x$ is a common root of the polynomials
$f_i$, as then it is a common root of the entire image. Thus, the Fitting ideal corresponding to rank $(d+k)$ is contained in $I_{d,n}$.

The following is our main result. 

\begin{theorem}\label{thm:minors}
  The resultant ideal $I_{d,n}\lhd  \fk[a_{i,j}]_{1\leq i\leq n, 0\leq j\leq d}$ is generated by all $(d+k) \times (d+k)$ minors of $M_k$ for $1\le k\le d$.
\end{theorem}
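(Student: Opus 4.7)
The inclusion of the ideal of minors inside $I_{d,n}$ is already established in the paragraph preceding the statement, so the task is to prove the reverse inclusion. Following the outline in the Introduction, the plan is to exhibit a subset $G$ of the stated minors that will be shown to be a Gr\"obner basis of $I_{d,n}$ itself; then $I_{d,n} = \langle G \rangle$ sits inside the ideal of all minors, which gives the theorem. Concretely, I would fix a term order $\prec$ on $\fk[a_{i,j}]$ tailored to the staircase geometry of the matrices $M_k$---a lexicographic or weight order under which the leading monomial of a well-chosen minor is the product of entries along a distinguished diagonal---and take $G$ to consist of a family of maximal minors of the $M_k$, $1\le k\le d$, selected so that each chosen diagonal visits any variable $a_{i,j}$ at most once. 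The resulting initial ideal $L := \langle \mathrm{in}_\prec(g) : g\in G\rangle$ is then a square-free, and hence radical, monomial ideal, amenable to Stanley--Reisner analysis.

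The combinatorial heart of the proof---and the step I anticipate as the principal obstacle---is to pin down $V(L)$ explicitly as a union of exactly $nd$ coordinate subspaces of affine dimension $nd+1$, matching $\deg X_{d,n} = \dim X_{d,n} = nd$ from Corollary~\ref{cor:degdimdn}. I would expect these coordinate subspaces to be parametrised by combinatorial data describing the vanishing patterns on the coefficients $a_{i,j}$ that force a limiting common root---plausibly by tuples $(e_1,\ldots,e_n)$ with $0\le e_i\le d$ and $\sum_i e_i = nd$, the extremal tuples recovering the obvious common roots at $\infty$ and at $0$. Establishing a bijection between the minimal primes of $L$ and such an index set, and ruling out extraneous components, will require a careful case analysis running across $k=1,\ldots,d$ and the blocks of $M_k$, and presumably leveraging the cascading shift between consecutive blocks to reduce higher-$k$ data to lower-$k$ data inductively.

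Once this structural description of $V(L)$ is in hand, the conclusion is routine. From $G\subset I_{d,n}$ we have $L \subset \mathrm{in}_\prec(I_{d,n})$; both ideals are homogeneous, and $\mathrm{in}_\prec(I_{d,n})$ has the same Hilbert polynomial as $I_{d,n}$, namely that of a scheme of dimension $nd+1$ and degree $nd$. Since $V(L)$ is a reduced equidimensional union of $nd$ coordinate subspaces each of degree one, its Hilbert polynomial matches: dimension $nd+1$, degree $nd$. A standard Hilbert-function comparison---the radical ideal $L$ is saturated and equal to the ideal of $V(L)$, while $\mathrm{in}_\prec(I_{d,n})$ is contained in this same Stanley--Reisner ideal and cannot define a strictly smaller scheme without dropping the top-dimensional degree below $nd$---then forces $L = \mathrm{in}_\prec(I_{d,n})$. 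By the usual Gr\"obner basis criterion, $G$ is a Gr\"obner basis of $I_{d,n}$, whence $I_{d,n} = \langle G\rangle$ lies inside the ideal generated by all minors of the matrices $M_k$, as required.
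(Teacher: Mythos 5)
Your plan coincides with the paper's strategy (a term order making diagonals lead, a square-free subset $G$ of minors, a count of the components of $V(\lt G)$, and a dimension/degree comparison against Corollary~\ref{cor:degdimdn}), and your closing Hilbert-polynomial argument is essentially the paper's final step. But as written this is an outline, not a proof: every substantive ingredient is deferred. You posit, without construction or verification, a term order under which the leading monomial of each minor is its diagonal product; the paper has to build a specific weight order (Proposition~\ref{prop:ltdiag}), and the claim is genuinely sensitive to the row ordering of $M_k$, so ``a lexicographic or weight order tailored to the staircase geometry'' cannot be taken for granted. You also do not specify $G$: the paper identifies the right subset via the combinatorics of minor walks and characterizes the \emph{reduced} walks (Lemma~\ref{lem:reduced}), which is what guarantees both square-freeness of the leading terms and that every minor's leading term is divisible by one from $G$.

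The step you yourself flag as the ``principal obstacle''---determining $V(\lt G)$---is exactly where your sketch goes astray. The minimal primes are not indexed by tuples $(e_1,\ldots,e_n)$ with $0\le e_i\le d$ and $\sum_i e_i=nd$ (there is only one such tuple, $(d,\ldots,d)$, so this cannot yield $nd$ components). In the paper (Proposition~\ref{prop:V(ltG)}) the components correspond to pairs $(s,t)$ with $1\le s\le n$, $1\le t\le d$, the minimal variable covers being $S_{s,t}=\{a_{i,t-1}:i<s\}\cup\{a_{i,t}:i>s\}$, i.e.\ each component is cut out by $n-1$ variables straddling two adjacent columns; proving that these are all the minimal covers is the real combinatorial work (an argument about which pairs of lattice points can be avoided by minor walks), not a routine case analysis over $k$ and the blocks of $M_k$. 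Until the term order, the set $G$, and this description of $V(\lt G)$ are actually established, the concluding comparison has nothing to stand on, so the proposal has a genuine gap even though its overall architecture matches the paper's.
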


\begin{remark}\label{rem_sylv}
In the above theorem, if $n$ is small with respect to $d$, there may be no appropriately
sized minors of $M_k$ for small $k$. For instance, the theorem asserts that in the case $n=2$,
$I_{d,2}$ is generated by a single $2d\times 2d$ determinant, the classical Sylvester resultant ${\rm Res}(f_1,f_2)$.

More generally, it is easy to see that for the largest value $k=d$, some of the $2d \times 2d$
minors of $M_d$ are just the pairwise resultants ${\rm Res}(f_i,f_j)$ of the original polynomials. The vanishing of the entire set of $2d \times 2d$ minors of $M_d$ is precisely the set-theoretic condition for the existence of a common root found by Kaki\'{e}~\cite{kakie}; we will recover this result below in Corollary~\ref{cor_kakie}. These degree~$2d$ polynomials alone clearly cannot generate the ideal $I_{d,n}$, as the minors of the smaller $M_k$ are of lower degree $d+k$.

At the other extreme $k=1$, the condition on the rank of $M_1$ is simply the condition that if $d+1$ univariate polynomials of degree $d$ share a root, then these polynomials are linearly dependent; this is easy to see directly. However, it is also immediate (for example for dimension reasons) that the set of equations $\mathrm{rk}\, M_1<d+1$ is not sufficient, even set-theoretically, to force a common root. 
\end{remark}

We will use Gr\"obner basis techniques to prove Theorem~\ref{thm:minors}. The first step is to establish a term order on the polynomial ring $\fk[a_{i,j}]_{1\leq i\leq n, 0\leq j\leq d}$, which is achieved in the next proposition. 

\begin{prop}\label{prop:ltdiag}
  There is a term order on $\fk[a_{i,j}]_{1\leq i\leq n, 0\leq j\leq d}$
  with the property that the leading monomial of any minor of $M_k$ is the product of its diagonal elements.
\end{prop}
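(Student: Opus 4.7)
The plan is to define an explicit term order and verify the diagonal of every minor is its leading monomial. I would take as primary a weight order with $w(a_{i,j}) = -j^2$, refined by the lexicographic order coming from the row-major variable ordering $a_{i,j} > a_{i',j'}$ iff $i < i'$, or $i = i'$ and $j < j'$. Thus $m > m'$ if $w(m) > w(m')$, or if $w(m) = w(m')$ and $m >_{\mathrm{lex}} m'$; this is a well-defined monomial order.

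To set up notation, fix any square submatrix of $M_k$ via rows $r_1 < \cdots < r_m$ and columns $c_1 < \cdots < c_m$, and write $r_t = (\ell_t - 1)n + i_t$ with $\ell_t \in \{1,\ldots,k\}$ the block index and $i_t \in \{1,\ldots,n\}$ the index within the block. Since $r_t$ is strictly increasing, $\ell_t$ is non-decreasing, and within any tie class the $i_t$'s are strictly increasing. The minor expands as $\sum_{\sigma \in S_m} \mathrm{sgn}(\sigma) \prod_t a_{i_t,\, c_{\sigma(t)} - \ell_t}$, with entries whose second index falls outside $[0,d]$ being zero.

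For the primary weight, the $\sigma$-term has weight $-\sum_t (c_{\sigma(t)} - \ell_t)^2 = -\sum c_t^2 - \sum \ell_t^2 + 2\sum c_{\sigma(t)} \ell_t$, so maximizing it amounts to maximizing $\sum c_{\sigma(t)}\ell_t$. Since $c_t$ is strictly increasing and $\ell_t$ non-decreasing, the rearrangement inequality identifies the maximizers as precisely those $\sigma$ that send each tie class $T_\lambda = \{t : \ell_t = \lambda\}$ to itself; in particular $\sigma = \mathrm{id}$ is one. To break the remaining tie among block-preserving $\sigma$, note that the $\sigma$-term factors as a product of contributions from each block. Within the block of label $\lambda$ the visible submatrix is a generic $s \times s$ submatrix of $(a_{i,j})$ with a strictly increasing row subset $I'\subset\{1,\ldots,n\}$ and column subset $J'\subset\{0,\ldots,d\}$. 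In row-major lex the largest variable appearing anywhere in its expansion is $a_{\min I',\min J'}$, which is present in the within-block diagonal but absent from any within-block $\sigma$ that fails to fix the smallest row-column pair; induction on the block size identifies the within-block diagonal as the lex-leader, and hence the overall diagonal as the unique leading monomial.

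The main obstacle is choosing the primary weight correctly: any weight linear in $j$ fails, since the multisets $\{c_{\sigma(t)} - \ell_t\}_t$ all share the common sum $\sum c_t - \sum \ell_t$, so any linear statistic is $\sigma$-independent. The strict convexity of $j \mapsto j^2$ supplies the first statistic separating the identity's diagonal from cross-block rearrangements, while within a single block the multiset of $j$-values is $\sigma$-invariant, forcing the additional lex refinement coming from the row index.
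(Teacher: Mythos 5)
Your proof is correct, but takes a genuinely different route from the paper's. The paper constructs a single carefully calibrated linear weight vector $w_{k,l}$ (built from an increasing sequence of increments $x_{i,j}$) so that any ``cross-to-nest'' $2\times 2$ exchange in a minor term strictly increases the weight, giving the diagonal as the unique weight-maximizer in one step. You instead split the analysis into two stages: a primary weight quadratic in the column index together with the rearrangement inequality to rule out permutations that mix the $k$ blocks of $M_k$, followed by a row-major lexicographic tiebreak to settle the remaining block-preserving permutations. Your observation that no weight \emph{linear} in $j$ alone can separate the block-mixing terms (since $\sum_t(c_{\sigma(t)}-\ell_t)$ is $\sigma$-independent) is exactly why the paper's weights must depend on $i$ as well as $j$; your convex-in-$j$ weight is an elegant alternative way to break that degeneracy. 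Two small points worth tightening: (i) as written, $w(a_{i,j})=-j^2\le 0$ does not by itself give a term order (the constant monomial would not be minimal); you should either shift to $w(a_{i,j})=d^2-j^2\ge 0$ or put a total-degree comparison first—either change has no effect on leading terms of the homogeneous minors. (ii) The step ``and hence the overall diagonal is the leading monomial'' deserves one sentence: the within-block lex-maxima multiply to the global lex-max because lex (like any term order) is compatible with multiplication; alternatively, note that within a block a cross pair $a_{i_1,j_1}a_{i_2,j_2}$ with $i_1<i_2$, $j_1>j_2$ is replaced under the swap by a product containing the strictly larger variable $a_{i_1,j_2}$, so the swap increases lex—the within-block analogue of the paper's $2\times 2$ argument.
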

\begin{remark}
  Observe that the content of this proposition is sensitive to the order of the rows of $M_k$, even as the set of minors up to sign is not. If the order of the
  rows of $M_k$ is permuted, the meaning of the diagonals of minors will change and the claim may no longer hold.
\end{remark}
\begin{remark}
  The proposition implies that a minor of $M_k$ is not identically zero exactly when its
  diagonal contains no zeros. The fact that a zero on the diagonal implies that
  the minor is zero can also be seen directly.
\end{remark}
\begin{proof}[Proof of Proposition \ref{prop:ltdiag}]
Fix an increasing sequence of positive numbers
\begin{multline*}
    x_{n,1}<x_{n-1,1}<\dots <x_{1,1}<x_{n,2}<x_{n-1,2}<\dots<x_{1,2}<x_{n,3}<\dots\\
    \dots<x_{n,d}<x_{n-1,d}<\dots<x_{1,d}.
\end{multline*}
Assign weight one to each $a_{k,d}$ for $k=1,\dots,n$. Inductively, from $l=d$ and going down to $l=0$, assign weight $w_{k,l}$ to each $a_{k,l}$ so that $w_{k,l}-w_{k,l+1}=x_{k,l+1}$. We claim that any term order compatible with the given weights will choose the diagonal as a leading term for any minor.

For contradiction assume this is not the case and fix a minor for which the leading term is not the diagonal. Let $X=(y_{i,j})$ be the corresponding submatrix. If the leading term does not correspond to the diagonal then it must be divisible by the product $y_{i,j}y_{p,q}$ so that $i<p$ and $q<j$. We claim that replacing this term by $y_{i,q}y_{p,j}$ would increase the weight, which gives the contradiction.

Say $y_{i,j}=a_{i',j'}$ and $y_{p,q}=a_{p',q'}$. Then there is a $c$ such that $y_{i,q}=a_{i',j'-c}$ and $y_{p,j}=a_{p',q'+c}$. In particular, we note that these are nonzero, as $q'<q'+c,j'-c\leq j'$. It remains to observe that the difference of weights $w_{i',j'-c}-w_{i',j'}$ is greater than the difference of weights $w_{p',q'}-w_{p',q'+c}$, which follows from the choice of $x_{i,j}$'s.
%MM: maybe I should write more details for the last sentence? Here a matrix picture really helps
 \begin{comment}
  We prove the claim by exhibiting an explicit term order with the property.
  Define $\prod_{i,j} a_{i,j}^{A_{i,j}} > \prod_{i,j} a_{i,j}^{B_{i,j}}$ if and only if
  \begin{enumerate}
    \item $\sum_{i,j} A_{i,j} > \sum_{i,j} B_{i,j}$, or 
    \item $\sum_{i,j} A_{i,j} = \sum_{i,j} B_{i,j}$ and there is a $j_0$ so
      that $\sum_i A_{i,j_0} < \sum_i B_{i,j_0}$ and $\sum_i A_{i,j} = \sum_i
      B_{i,j}$ when $j > j_0$, or
    \item $\sum_i A_{i,j} = \sum_i B_{i,j}$ for all $j$, and there is an $i_0,j_0$ so that $A_{i_0,j_0} > B_{i_0,j_0}$ and $A_{i,j} = B_{i,j}$ when either $i < i_0$ or $i=i_0$ and $j<j_0$.
\end{enumerate}

An alternative description of the order is the following. Consider indeterminates $b_j$ and
$c_{i,j}$, and endow monomials in $b_j$ with the degree reverse lexicographic
order and monomials in $c_{i,j}$ with the lexicographic order
with variables increasing down each column, column by column left to right.
Given a monomial $a^A$, write $b^A$ for $a^A$ where $b_j$ is substituted
for $a_{i,j}$ and write $c^A$ for $a^A$ where $c_{i,j}$ is
substituted for $a_{i,j}$. Then $a^A > a^B$ if and only if $b^A > b^B$ or $b^A
= b^B$ and $c^A > c^B$.

TODO Need to prove the claim.
\end{comment}
\end{proof}
Choosing a $(d+k) \times (d+k)$ minor of $M_k$ is the same as choosing a subset of
the rows of size $d+k$. Rows of $M_k$ are naturally indexed by pairs $(i,j)$, where
$1\le i\le k$ and $1\le j \le n$, so we may identify such minors with
their lexicographically ordered list of pairs $( (i_1,j_1), \ldots, (i_{d+k},
j_{d+k}))$. Here, the ordering corresponds to taking rows of $M_k$ from top to
bottom.

\begin{corollary}
  Write $a_{i,j} = 0$ if $j < 0$ or $j > d$.
  The leading monomial of the minor $( (i_1,j_1), \ldots, (i_{d+k}, j_{d+k}))$ is
  $\prod_{s=1}^{d+k} a_{j_s,s-i_s}$.
\end{corollary}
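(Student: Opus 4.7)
The plan is to derive this as a direct bookkeeping consequence of Proposition~\ref{prop:ltdiag}. The only real content is to correctly identify the diagonal entries of the chosen submatrix in the $(i,j)$ indexing; once that is done, the proposition supplies the leading-term statement.

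First, I would describe the entries of $M_k$ under the row indexing $(i,j)$. The row labeled $(i,j)$ is the $j$-th row of the $i$-th horizontal block, and this block is shifted $i-1$ columns to the right relative to the first block, so that its leftmost nonzero entry $a_{j,0}$ appears in column $i$. Hence the entry of $M_k$ in row $(i,j)$ and column $c$ is $a_{j,\,c-i}$, using the convention $a_{j,l}=0$ for $l<0$ or $l>d$. Since the lexicographic order on pairs $(i,j)$ with $1\le i\le k$, $1\le j\le n$ agrees with the physical top-to-bottom order of the rows of $M_k$ (by the block structure of the matrix), the selected rows $((i_1,j_1),\ldots,(i_{d+k},j_{d+k}))$ list themselves in their natural order inside $M_k$, and the resulting $(d+k)\times (d+k)$ submatrix has $(s,c)$-entry equal to $a_{j_s,\,c-i_s}$.

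Taking $c=s$ gives the diagonal entries $a_{j_s,\,s-i_s}$ for $s=1,\ldots,d+k$, and Proposition~\ref{prop:ltdiag} asserts that the leading monomial of the minor is the product of these diagonal entries, i.e.\ $\prod_{s=1}^{d+k} a_{j_s,\,s-i_s}$. There is no substantive obstacle beyond the indexing check already performed; in particular one should simply verify that the lex ordering on row labels coincides with their order within $M_k$, so that ``diagonal'' in the sense of Proposition~\ref{prop:ltdiag} matches ``diagonal'' of the submatrix indexed as in the corollary.
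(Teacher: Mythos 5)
Your proposal is correct and matches the paper's intent exactly: the paper states this corollary without proof as an immediate consequence of Proposition~\ref{prop:ltdiag}, and your argument supplies precisely the routine indexing verification (the $(i,j)$-th row of $M_k$ has $a_{j,c-i}$ in column $c$, lex order on row labels equals top-to-bottom order, so the diagonal of the chosen submatrix is $a_{j_s,s-i_s}$) that is implicitly being invoked. Nothing is missing.
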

For fixed minor, for each $s$, write $(u_s,v_s) := (j_s,s-i_s)$. When $s > 1$, we have that either $v_s \le
v_{s-1}$ or ($v_s = v_{s-1} + 1$ and $u_s > u_{s-1}$). These correspond to the
cases that $i_s > i_{s-1}$ and $i_s = i_{s-1}$, respectively. Restrict attention
now to nonzero minors. These are exactly those for which each $(u_s,v_s)$ is contained
within the $n\times (d+1)$ lattice. We always have $v_1 = 1-i_1\le 0$ and $v_{d+k} =
d+k-i_{d+k} \ge d$, so
for a nonzero minor in particular equality holds for both. Call a walk $(u_1,v_1),\dots,(u_{d+k},v_{d+k})$ through the
lattice satisfying the conditions
\begin{enumerate}
\item $v_{s+1}\le v_s$ or both $v_{s+1} = v_s + 1$ and $u_{s+1} > u_s$;
\item $v_1=0$ and $v_{d+k}=d$
\end{enumerate}
 a \emph{minor walk}. Every minor walk arises
from an actual minor: the only thing to check is that $i_s$ satisfies
$1\le i_s \le k$. These are the conditions $s-k \le v_s\le s-1$. The 
upper bound follows from $v_1 = 0$ and $v_{s+1} \le v_s+1$ and the lower bound
from the fact that $v_{d+k} = d$ and $v_{s-1} \ge v_s-1$. We have shown

\begin{prop}
  The nonzero $(d+k)\times (d+k)$ minors of $M_k$ correspond exactly to minor walks
  of length $d+k$. The leading monomial of the minor corresponding to a walk is
  obtained by multiplying the variables corresponding to the visited lattice
  points, counted with multiplicity.
\end{prop}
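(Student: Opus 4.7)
The plan is to establish the claimed bijection between nonzero $(d+k)\times(d+k)$ minors of $M_k$ and minor walks of length $d+k$, then read off the leading-monomial assertion from the immediately preceding corollary.

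For the forward direction, starting from a nonzero minor specified by its lex-ordered row list $((i_1,j_1),\ldots,(i_{d+k},j_{d+k}))$, I set $(u_s,v_s):=(j_s,s-i_s)$. The case analysis in the paragraph preceding the proposition already gives condition~(1): when $i_s>i_{s-1}$ one has $v_s\le v_{s-1}$, and when $i_s=i_{s-1}$ one has $v_s=v_{s-1}+1$ with strict lex order forcing $u_s>u_{s-1}$. By Proposition~\ref{prop:ltdiag} the minor is nonzero iff every diagonal entry $a_{j_s,s-i_s}=a_{u_s,v_s}$ is nonzero, which under the convention $a_{i,j}=0$ for $j<0$ or $j>d$ means that each $(u_s,v_s)$ lies in the $n\times(d+1)$ lattice. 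Combined with the a priori inequalities $v_1=1-i_1\le 0$ and $v_{d+k}=d+k-i_{d+k}\ge d$, the lattice condition pins down condition~(2).

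For the backward direction, given a minor walk I set $(i_s,j_s):=(s-v_s,u_s)$ and verify that this is the lex-ordered row list of a legitimate minor. The constraint $1\le j_s\le n$ is immediate from the lattice, so the substance is $1\le i_s\le k$, equivalently $s-k\le v_s\le s-1$. Here condition~(1) supplies $v_{s+1}\le v_s+1$ in both clauses, so the upper bound propagates forward from $v_1=0$ and the lower bound propagates backward from $v_{d+k}=d$. Strict lex order and distinctness of the resulting rows follow from the same case split, read in reverse. The two assignments are manifestly inverse, so the bijection is established.

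Finally, the leading-monomial claim is immediate from the preceding corollary: the leading monomial is $\prod_{s=1}^{d+k}a_{j_s,s-i_s}=\prod_s a_{u_s,v_s}$, precisely the product of variables at the visited lattice points counted with multiplicity. The proof has no serious obstacle; the only point worth flagging is that the uniform weak inequality $v_{s+1}\le v_s+1$ hidden in condition~(1) is exactly what lets the boundary values $v_1=0$ and $v_{d+k}=d$ force $v_s\in[s-k,s-1]$, and hence $i_s\in[1,k]$, throughout the walk.
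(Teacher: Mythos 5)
Your proposal is correct and follows essentially the same route as the paper: the forward direction uses the same change of coordinates $(u_s,v_s)=(j_s,s-i_s)$, the same case analysis on $i_s>i_{s-1}$ versus $i_s=i_{s-1}$, and the diagonal-in-the-lattice criterion for nonvanishing to pin down the boundary values, while the backward direction verifies $1\le i_s\le k$ by propagating $v_{s+1}\le v_s+1$ from $v_1=0$ and $v_{d+k}=d$, exactly as in the text preceding the proposition. The only difference is that you spell out the lex-order and distinctness check for the reconstructed rows, which the paper leaves implicit.
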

%More intuitively, but less formally, a minor walk is obtained by choosing a nonzero minor of $M_k$, labelling the diagnal entries $1,\dots,d+k$ and forgetting all the other entries.
%\begin{remark}
%  There is an involution on minor walks: If $(u_1,v_1),
%  \ldots, (u_{d+k},v_{d+k})$ is a minor walk, then so is
%  $ (n+1-u_{d+k},d-v_{d+k}), \ldots, (n+1-u_1,d-v_1)$.
%\end{remark}
If any subset of the minors of Theorem \ref{thm:minors} forms a Gr\"obner
basis, then so must a subset whose leading terms divide the leading terms of any
minor. Let us construct a minimal such subset, which we will see actually
corresponds to a unique set of minors. Let a \emph{reduced} minor walk denote a
minor walk which is minimal under inclusion, i.e., one for which no vertex can
be deleted and remain a minor walk.
\begin{lemma}\label{lem:reduced}
  A minor walk $(u_1,v_1), \ldots, (u_{d+k},v_{d+k})$ is reduced if and only if
  \begin{enumerate}
    \item $v_{s+1} \ge v_s$;
    \item $v_2 = 1$ and $v_{d+k-1} = d-1$;
    \item if $v_{s+1} = v_s$, then $v_s = v_{s-1}+1$, $v_{s+2} = v_{s+1} +
      1$, $u_s \ge u_{s+2}$ and $u_{s-1} \geq u_{s+1}$. In particular, $u_{s} > u_{s+1}$.
  \end{enumerate}
  Furthermore, a reduced minor walk visits each vertex at most once, is
  determined by the set of vertices it visits, and no minor walk can visit a
  proper subset of the visited vertices.
\end{lemma}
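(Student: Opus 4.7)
The plan is to translate reducedness into conditions forbidding each candidate deletion. A single interior vertex $(u_s, v_s)$ can be deleted from a minor walk iff the induced step $(u_{s-1}, v_{s-1}) \to (u_{s+1}, v_{s+1})$ is still a valid minor-walk step; the first or last vertex can be deleted iff the truncation still has $v = 0$, resp.\ $v = d$, at the new endpoint. Reducedness is exactly the failure of every such deletion. For the forward direction, (1) follows because an interior $v$-decrease $v_{s+1} < v_s$ forces $v_{s+1} \le v_{s-1}$ (using $v_s \le v_{s-1}+1$ from the minor-walk axiom), so the induced step $s-1 \to s+1$ is automatically non-ascending and hence a valid deletion; (2) is precisely the endpoint-deletion condition; and (3) encodes the constraints around a stay $v_{s+1} = v_s$, where the $v$-clauses block $v$-based deletions of $s$ and $s+1$ and $u_{s+2} \le u_s$ blocks the only remaining ascending-step deletion. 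For the reverse direction, a direct case analysis shows (1)-(3) block every candidate deletion.

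For the structural consequences, (1) forces the walk monotone non-decreasing in $v$, and the clause $v_{s+2} = v_{s+1}+1$ of (3) prevents three consecutive vertices from sharing a $v$-level. Hence each level supports at most two vertices, and the consequence $u_{s+1} < u_s$ orders such a pair. This simultaneously gives that each vertex is visited at most once and that the walk is recovered from its vertex set by sorting by $v$ with ties broken by decreasing $u$.

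For the minimality claim, I argue by contradiction. Suppose $W'$ is a minor walk with vertex set $V' \subsetneq V$. Since every minor walk visits each of $v = 0, \ldots, d$, $V'$ contains at least one vertex per level, so the missing vertex of $V$ must come from a level $v^*$ where $V$ has a pair $(a, v^*), (b, v^*)$ with $a > b$. I then pick an ascent of $W'$ from $v^*$ to $v^* + 1$: it starts at the surviving vertex of $V'$ at $v^*$ and ends at a vertex of $V$ at $v^* + 1$, with a strict $u$-increase. Comparing this increase against (3) at the stay of $W$ sitting at $v^*$ (where $u_{s+2} \le u_s$ and $u_{s+2} > u_{s+1}$) pins down which vertex of $V$ at $v^*$ is absent from $V'$ and forces an adjacent vertex of $W$ to admit a deletion, contradicting reducedness. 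The main obstacle is this final step, since $W'$ may have repeated vertices and descents, and the case analysis becomes delicate when the levels adjacent to $v^*$ also support paired vertices in $V$.
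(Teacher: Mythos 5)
Your forward direction, and your argument that a reduced walk visits each vertex at most once and is recovered from its vertex set by sorting by $v$ with ties broken by decreasing $u$, are fine and essentially the paper's. The genuine problems are in the two parts you leave unargued. The reverse direction is not a routine ``direct case analysis'': around a stay $v_{s+1}=v_s$ there are \emph{two} ascending skip-steps to rule out, the one created by deleting $(u_{s+1},v_{s+1})$ (blocked by $u_{s+2}\le u_s$) and the one created by deleting $(u_s,v_s)$, namely $(u_{s-1},v_s-1)\to(u_{s+1},v_s)$, which is a \emph{legal} ascent whenever $u_{s+1}>u_{s-1}$ --- and nothing in conditions (1)--(3) excludes that. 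Concretely, for $d=2$, $n\ge 3$, $k=2$, the walk $(1,0),(3,1),(2,1),(3,2)$ satisfies (1)--(3) (in particular $u_4=3\le u_2=3$), yet deleting $(3,1)$ leaves the minor walk $(1,0),(2,1),(3,2)$, so it is not reduced. Thus your claim that (1)--(3) ``block every candidate deletion'' fails unless (3) is supplemented by $u_{s+1}\le u_{s-1}$; this is exactly the delicate point, which the paper's own converse also glosses over (its dichotomy ``$v_t\ge v_s+2$, or $v_t=v_s+1$ and $u_s\le u_t$'' is only an obstruction to a direct step when $u_t\le u_s$). So this step cannot be discharged by a wave at case analysis: as written it is a real gap, and closing it requires strengthening the stay condition.

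The final claim (no minor walk visits a proper subset of the vertices), which you yourself flag as the main obstacle, is indeed not closed by your level-$v^*$ argument. If the absent vertex is the larger-$u$ member $(u_s,v^*)$ of the pair, then the ascent of $W'$ out of level $v^*$ can simply be the step $(u_{s+1},v^*)\to(u_{s+2},v^*+1)$ of $W$ itself, which is legal; nothing is ``pinned down'' locally and no adjacent deletion is produced, so your contradiction does not materialize in that case. The paper handles this globally rather than at the missing level: any minor walk through a subset of the vertices must start at $(u_1,v_1)$ and end at $(u_{d+k},v_{d+k})$, hence for every $s$ it must at some point step from $\{(u_1,v_1),\dots,(u_s,v_s)\}$ into the complementary tail, and by the same skip-step illegality used in the converse the only legal crossing step is $(u_s,v_s)\to(u_{s+1},v_{s+1})$; therefore every vertex occurs and the steps are forced. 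You would need this prefix/suffix (or an induction-along-$W$) argument; the analysis localized at the level of the missing vertex is too weak, for exactly the reasons you anticipate.
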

\begin{proof}
  Let $(u_1,v_1),\ldots,(u_{d+k},v_{d+k})$ be a minor walk. We first show if any of the conditions of the claim are violated, this walk is not reduced.
  \begin{enumerate}
      \item If $v_{s+1}<v_s$, then $v_{s+1}\leq v_{s-1}$ and thus we can remove the $s$-th step.
      \item If $v_2\neq 1$, then $v_2=0$ and we may remove the first step. Analogously if $v_{d+k-1}\neq d-1$ we may remove the last step.
      \item Suppose $v_{s+1}=v_s$. If $v_s\neq v_{s-1}+1$ or $u_{s-1} < u_{s+1}$,
        we may remove the $s$-th step. 
        If $v_{s+2}\neq v_{s+1}+1$ or $u_s < u_{s+2}$ we may remove the $(s+1)$-st step.
  \end{enumerate}
  Conversely, suppose the walk satisfies the conditions, and consider
  $(u_s,v_s)$ and $(u_t,v_t)$ for $t\ge s + 2$. We have either $v_t\ge v_s + 2$
  or both $v_t = v_s+1$ and $u_s \geq u_t$. In either case, it is illegal to
  make such a step directly in a minor walk, and it follows that the walk is reduced. 

  Now, note that a reduced walk can visit a vertex at most once, as otherwise
  we could remove the part of the walk from leaving a given vertex until coming
  back to it.
  For the remaining claims, consider any minor walk visiting a subset of $\{(u_1,v_1),\ldots,
  (u_{d+k},v_{d+k})\}$.
  Such a walk must begin with $(u_1,v_1)$ and end with
  $(u_{d+k},v_{d+k})$, so for each $s$, it must at some point pass from the set
  $\{(u_1,v_1),\ldots,(u_s,v_s)\}$ to
  $\{(u_{s+1},v_{s+1},\ldots,(u_{d+k},v_{d+k})\}$. By the reasoning of the last
  paragraph, the only legal step accomplishing this is $(u_s,v_s),
  (u_{s+1},v_{s+1})$. This establishes the remaining claims.
\end{proof}

Let $G$ be the set of minors corresponding to reduced walks. From the second
claim of Lemma~\ref{lem:reduced}, every leading term not divisible by another is represented in
$G$ exactly once, and no others are. From the definition of reduced, it is clear
that the leading term of any minor is divisible by that of one in $G$.
Furthermore, by the same Lemma, it is clear that reduced walks
must have length at
most $2d$ and that this length is achievable when $n\ge 2$. Walks of length at
most $2d$ correspond to minors of $M_k$, $k\le d$, which accounts for the reason
the claim of Theorem \ref{thm:minors} is as it is and is sharp.

Now let us determine $V(\lt G)$, which consists of coordinate subspaces. The
equations of one of the components consist of a minimal subset of variables so
that all generators of $\lt G$ are divisible by at least one in the subset.

\begin{prop}\label{prop:V(ltG)}
  Let $1\le s\le n$ and $1\le t \le d$. Write 
  \[S_{s,t} = \{ a_{i,t-1}, i < s
  \} \cup \{ a_{i,t} , i > s \}.\] 
  Then $S_{s,t}$ is an inclusion minimal subset
  of variables intersecting the vertices of every (reduced) minor walk, and all
  such subsets are one of the $S_{s,t}$. 

  Thus, there are $nd$ such subsets, each of size $n-1$. In particular $V(\lt
  G)$ is equidimensional of projective dimension $nd$ and degree $nd$.
\end{prop}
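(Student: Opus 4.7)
The plan is to interpret $V(\lt G)$ as the reduced Stanley--Reisner scheme of the squarefree monomial ideal $\lt G$: its irreducible components are exactly the coordinate subspaces indexed by inclusion-minimal transversals $S$ of the supports of the leading monomials in $G$, equivalently, the minimal subsets of variables meeting the vertex set of every reduced minor walk. The proof then splits into (a) each $S_{s,t}$ is such a minimal transversal, and (b) every minimal transversal arises in this way; the dimension and degree assertions then follow from a simple count.

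Transversality is direct: by Lemma~\ref{lem:reduced}, any reduced walk has nondecreasing $v$-coordinates, so it crosses from column $t-1$ to column $t$ via a unique NE step $(u,t-1)\to(u',t)$ with $u'>u$. A walk avoiding $S_{s,t}$ would force $u\ge s$ and $u'\le s$, a contradiction. For inclusion-minimality, I would exhibit, for each $a\in S_{s,t}$, a reduced walk visiting $a$ and no other vertex of $S_{s,t}$. Concretely, for $a=a_{i_0,t-1}$ with $i_0<s$ I propose a length-$2d$ zigzag walk whose upper peaks in columns up to $t-1$ sit at row $n$, whose lower valley in column $t-1$ is at $i_0$, whose NE step then lands at $(s,t)$, and whose later upper peaks remain at row $s$; the monotonicity of peaks required by Lemma~\ref{lem:reduced} is automatic since the sequence $n,\dots,n,s,\dots,s$ is nonincreasing. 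The case $a=a_{i_0,t}$ with $i_0>s$ is symmetric, and the endpoint cases $t=1,d$ require minor adaptations at the initial or final column, which is visited only once.

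The main obstacle is the classification of minimal transversals in step (b), for which I would reduce to two auxiliary lemmas. The \emph{cut lemma} states that a subset $T$ of the $n\times(d+1)$ lattice supports no minor walk if and only if some column pair $(c,c+1)$ admits no NE pair $(u,c),(u',c+1)\in T$ with $u<u'$. Its nontrivial direction constructs a walk inductively, advancing column by column and inserting a stay step down to a small row of $T$ in the current column whenever a direct NE step to the next column is blocked by the available row heights. The \emph{bipartite cover lemma} concerns the strict-increase bipartite graph between two copies of $\{1,\dots,n\}$: its maximum matching $(1,2),(2,3),\dots,(n-1,n)$ has size $n-1$, so by K\"onig the minimum vertex covers have size $n-1$, and a short case analysis identifies them as exactly $\{(i,L):i<s\}\cup\{(i,R):i>s\}$ for $s=1,\dots,n$.

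Combining these: given a minimal transversal $S$, applying the cut lemma to $T=V\setminus S$ produces some $c$; setting $t=c+1$, the intersection $S\cap(\text{col } t-1\cup\text{col } t)$ is a vertex cover of the bipartite graph and thus contains some minimum cover $S_{s,t}$, whence $S=S_{s,t}$ by minimality of $S$ and transversality of $S_{s,t}$. Finally, $|S_{s,t}|=(s-1)+(n-s)=n-1$ for every pair, so all $nd$ coordinate subspaces share projective dimension $n(d+1)-1-(n-1)=nd$; the reduced union of $nd$ distinct linear subspaces of common dimension has degree $nd$, completing the proposition.
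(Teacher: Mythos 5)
Your proof is correct, and it takes a genuinely different route from the paper's. Both arguments share the easy direction — showing that each $S_{s,t}$ is a transversal, and that it is inclusion-minimal by exhibiting, for each $a\in S_{s,t}$, a reduced walk hitting $a$ and nothing else (the paper merely asserts this is ``easy to construct''; your zigzag construction supplies the details, modulo the endpoint cases you flag). The divergence is in classifying minimal transversals. The paper argues directly: taking a minimal transversal $S$ and two of its elements $(i_1,j_1),(i_2,j_2)$, it uses minimality to obtain walks $W_1,W_2$ hitting $S$ only at these respective points, and shows that unless the two points lie in adjacent columns with the right row separation, the prefix of one walk spliced to the suffix of the other is a walk avoiding $S$ — a contradiction. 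This confines $S$ to two adjacent columns with a triangular shape, hence $S\subseteq S_{s,t}$, and minimality of $S_{s,t}$ gives equality. You instead factor the argument through a ``cut lemma'' localizing the nonexistence of a walk in the complement $T=V\setminus S$ to a single column pair with no strictly-increasing (NE) pair in $T$, and then a classification of vertex covers of the strict-order bipartite graph on $\{1,\dots,n\}\times\{1,\dots,n\}$. Your route is more modular and conceptually cleaner; the paper's is shorter and more self-contained. One step to tighten: after K\"onig you write that a vertex cover ``thus contains some minimum cover $S_{s,t}$''. This is not a formal consequence of K\"onig's theorem (a vertex cover of a general bipartite graph need not contain a minimum cover), but it does hold here: writing $L_0,R_0$ for the uncovered left and right vertices, the cover condition is $\min L_0\ge\max R_0$, and any $s$ with $\max R_0\le s\le\min L_0$ gives $S_{s,\cdot}$ contained in the cover. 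This same computation also shows every inclusion-minimal cover is minimum, which is what you actually need. Finally, you correctly compute $|S_{s,t}|=(s-1)+(n-s)=n-1$ and derive the projective dimension $n(d+1)-1-(n-1)=nd$; note the paper's statement ``each of size $d-1$'' is a typo for $n-1$, which your count silently corrects.
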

\begin{proof}
  A minor walk must intersect $S_{s,t}$ at the beginning or end of any step it
  passes from column $t-1$ to $t$, and it must do this at least once. It is easy
  to construct a minor walk avoiding any proper subset of $S_{s,t}$, so the
  first claim is shown.

  Now, let $S$ be an inclusion minimal subset intersecting any walk, and
  identify variables with corresponding lattice points. Suppose $(i_1,j_1),
  (i_2,j_2) \in S$, $i_1\le i_2$. Since $S$ is minimal under inclusions, there
  must be a minor walk avoiding any proper subset of $S$, in particular there is
  a pair of minor walks that avoid $S$ except for exactly
  $(i_1,j_1)$ and $(i_2,j_2)$, respectively.
  
  Suppose either $i_1 +2 \le i_2$ or $i_1 + 1 = i_2$ and $j_1\ge j_2-1$. Then
  the prefix of the path into and excluding the first occurrence of $(i_2,j_2)$
  followed by the suffix out of and excluding the last occurrence $(i_1,j_1)$ is
  a minor walk avoiding $S$, contradiction. Hence $S$ is confined to one column, in which case it equals $S_{1,t}$ or $S_{n,t}$, or to two
  adjacent columns, and any element of $S$ in the left column rules out any in
  the right column above one below the element. 
  Such a set $S$ is then a subset
  of an $S_{s,t}$, and by minimality of $S_{s,t}$ equal.
\end{proof}
We can now finish the proof of our main result.
%MM: below we slighlty repeat ourselves, but maybe it better to have it as a formal proof
\begin{proof}[Proof of Theorem \ref{thm:minors}]
    We will prove that $G$ is a Gr\"obner basis of the resultant ideal $I_{d,n}$, that is, $\lt
  G$ generates the initial ideal of $I_{d,n}$. By Proposition \ref{prop:V(ltG)}, we know that $V(\lt G)$ is a reduced, equidimensional variety of degree $dn$ and dimension $dn$. For any ideal $J$ that strictly contains $\lt G$, the variety $V(J)$ must be strictly included in $V(\lt G)$. In particular, it must have either strictly smaller dimension, or the same dimension and strictly smaller degree. 
  However, $V(\lt I_{d,n})$ has the same dimension and degree as $V(I_{d,n})$, that is, $dn$ and $dn$ by Corollary \ref{cor:degdimdn}. Thus, $\lt I_{d,n}$ cannot strictly contain the ideal generated by $\lt G$, and as $G\subset I_{d,n}$, $G$ is a Gr\"obner basis for $I_{d,n}$ and in particular generates it.
\end{proof}

\begin{remark} A question arises as to whether one could explicitly identify a {\em minimal} generating set for $I_{d,n}$, perhaps within our Gr\"obner basis $G$. Our work sheds no light on this interesting question. The problem of finding a small set of equations for the resultant locus, albeit in the set-theoretic sense, was studied in~\cite{Lub}. 
\end{remark}

\section{Final remarks}\label{sect_conc}
We have provided a complete set of generators for the resultant ideal $I_{d,n}$. 
% It is even prime, as X_d,n is naturally parameterized as the image of an irreducible variety
We proceed to explain how this is related to equations defining $X_{d,n}=V(I_{d,n}) \subset \PP^{n(d+1)-1}$.
\begin{lemma}
    If the rank of $M_k$ is strictly smaller than $d+k$, then the rank of $M_{k-1}$ is strictly smaller than $d+k-1$.
\end{lemma}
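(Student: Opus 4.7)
The plan is to interpret the rank of $M_k$ as the dimension of a certain subspace of polynomials, and then relate the subspaces for consecutive values of $k$ via multiplication by $x$.

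Concretely, reading each row of $M_k$ as a polynomial in $P_{d+k-1}$ (the space of polynomials of degree at most $d+k-1$), the $nk$ rows become $\{x^i f_j(x) : 0\le i\le k-1,\ 1\le j\le n\}$. Let $V_k\subseteq P_{d+k-1}$ denote their linear span. Then $\mathrm{rk}\, M_k = \dim V_k$, and since $\dim P_{d+k-1} = d+k$, the hypothesis $\mathrm{rk}\, M_k < d+k$ is equivalent to $V_k \subsetneq P_{d+k-1}$. Similarly, $\mathrm{rk}\, M_{k-1} < d+k-1$ is equivalent to $V_{k-1}\subsetneq P_{d+k-2}$.

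The key observation is the identity $V_k = V_{k-1} + xV_{k-1}$, where $V_{k-1}\subseteq P_{d+k-2}\subseteq P_{d+k-1}$: indeed $V_{k-1}$ already contains $x^i f_j$ for $0\le i\le k-2$, and multiplying by $x$ produces $x^i f_j$ for $1\le i\le k-1$, which together generate $V_k$. I would combine this with the trivial decomposition $P_{d+k-1} = P_{d+k-2} + xP_{d+k-2}$ (any $p\in P_{d+k-1}$ may be written as $p(0) + x\cdot\bigl(p-p(0)\bigr)/x$, with both summands in $P_{d+k-2}$): if $V_{k-1} = P_{d+k-2}$, then $V_k \supseteq P_{d+k-2} + xP_{d+k-2} = P_{d+k-1}$.

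Taking the contrapositive yields exactly the statement. There is no real obstacle; the only thing to verify carefully is the identification of rows of $M_k$ with the polynomials $x^i f_j(x)$, which is already implicit in the interpretation of $M_k^t$ as the map $(h_1,\ldots,h_n)\mapsto f_1 h_1+\cdots+f_n h_n$ given before Theorem~\ref{thm:minors}.
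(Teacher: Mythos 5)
Your proof is correct. The identification of the rows of $M_k$ with the polynomials $x^i f_j$, $0\le i\le k-1$, is accurate (the columns correspond to the descending monomials $x^{d+k-1},\dots,x^0$), the identity $V_k = V_{k-1}+xV_{k-1}$ holds, and combining it with $P_{d+k-1}=P_{d+k-2}+xP_{d+k-2}$ gives the contrapositive of the lemma with no gaps. Your route is, however, genuinely different from the paper's. The paper argues by contradiction at the matrix level: it views $M_{k-1}$ (padded by a zero column) as the upper-left submatrix of $M_k$, so full column rank of $M_{k-1}$ together with $\mathrm{rk}\,M_k<d+k$ forces $\mathrm{rk}\,M_k=\mathrm{rk}\,M_{k-1}=d+k-1$, hence the row space of $M_k$ lies in the span of rows with vanishing last coordinate; thus the last column of $M_k$ is zero, i.e.\ $a_{j,d}=0$ for all $j$, which then annihilates the last column of $M_{k-1}$ (the paper's text says ``last row,'' which appears to be a slip), contradicting full rank. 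Your version proves the contrapositive directly, replacing the zero-column case analysis by the surjectivity statement ``$V_{k-1}=P_{d+k-2}$ implies $V_k=P_{d+k-1}$.'' What you gain is a cleaner, more conceptual argument that fits the paper's own interpretation of $M_k^t$ as the multiplication map $(h_1,\dots,h_n)\mapsto\sum f_jh_j$ and makes transparent why adding one more shift of each $f_j$ and one more column preserves fullness of rank; what the paper's argument buys is that it stays entirely at the level of the matrices $M_k$ and their explicit block structure, requiring no reinterpretation of rows as polynomials.
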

\begin{proof}
    Suppose for contradiction that $M_{k-1}$ has rank $d+k-1$. We may consider $M_{k-1}$ as an upper left submatrix of $M_k$. Thus the rank of $M_k$ and $M_{k-1}$ would have to be equal. This would be only possible if the last column of $M_k$ is zero. But in this case, so would be the last row of $M_{k-1}$, which is a contradiction. 
\end{proof}
We recover the set theoretic result by Orsinger and Kaki\'e.
\begin{corollary} \label{cor_kakie} The set-theoretic zero locus of all $2d\times 2d$ minors of $M_d$ is the variety $X_{d,n}\subset \PP^{n(d+1)-1}$. 
\end{corollary}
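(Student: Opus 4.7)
The plan is to combine Theorem~\ref{thm:minors} with the lemma immediately preceding the corollary. One containment is trivial, so the whole argument reduces to a short downward induction.

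First, I would observe that the $2d \times 2d$ minors of $M_d$ are among the generators of $I_{d,n}$ listed in Theorem~\ref{thm:minors}, so the inclusion $X_{d,n} \subseteq V(\text{$2d\times 2d$ minors of } M_d)$ is immediate.

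For the reverse inclusion, take a point $p \in \PP^{n(d+1)-1}$ at which all $2d \times 2d$ minors of $M_d$ vanish; equivalently, $\mathrm{rk}\, M_d(p) < 2d = d + d$. Applying the preceding lemma inductively in $k$, decreasing from $k=d$ down to $k=1$, I obtain $\mathrm{rk}\, M_k(p) < d+k$ for every $1 \le k \le d$. This is precisely the statement that all $(d+k) \times (d+k)$ minors of $M_k$ vanish at $p$ for every such $k$. By Theorem~\ref{thm:minors} these minors generate $I_{d,n}$, so $p \in V(I_{d,n}) = X_{d,n}$.

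There is no real obstacle here: the content is entirely packaged in Theorem~\ref{thm:minors} and in the rank-cascade lemma just proved, and the corollary is simply the observation that the top-level rank condition propagates to all smaller $k$. One minor point worth writing down carefully is that $V(I_{d,n})$ really is the variety $X_{d,n}$ in the sense of the introduction, which holds because $I_{d,n}$ was defined as the elimination ideal of $\langle f_i(x)\rangle$ and is therefore radical, so passing from the generating set to the set-theoretic vanishing locus introduces no loss of information.
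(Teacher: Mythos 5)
Your proof is correct and matches the paper's intended argument: the corollary is deduced from Theorem~\ref{thm:minors} together with the rank-cascade lemma immediately preceding it, applied by downward induction on $k$, exactly as you describe (the paper simply leaves this deduction implicit).
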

As noted before, in general these minors clearly cannot generate the ideal $I_{d,n}$, as smaller minors have smaller degree. However, the following result holds\footnote{This statement and its the proof were communicated to us by Jan Stevens after we posted the first version of the article on the arXiv.}. 
%As we show however in the next proposition, the following result is true. In fact the $2d\times 2d$ minors of $M_d$ define an ideal whose saturation with respect to the irrelevant ideal is $I_{d,n}$. 
\begin{prop}\label{prop:sat}
Let $J_{d,n}$ be the ideal generated by all $2d\times 2d$ minors of $M_d$, and let $m_{d,n}$ be the irrelevant ideal in $K[a_{i,j}]$. The saturation of $J_{d,n}$ with respect to $m_{d,n}$ equals $I_{d,n}$. Equivalently, $J_{d,n}$ and $I_{d,n}$ define the same projective scheme.
\end{prop}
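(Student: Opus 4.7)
The plan is to establish $J_{d,n}^{\mathrm{sat}} = I_{d,n}$ by two inclusions. Since $J_{d,n} \subseteq I_{d,n}$, saturation gives $J_{d,n}^{\mathrm{sat}} \subseteq I_{d,n}^{\mathrm{sat}} = I_{d,n}$; the final equality holds because $I_{d,n}$ is a homogeneous prime (by the irreducibility of $X_{d,n}$) containing no coordinate variable (since $X_{d,n}$ is not contained in any coordinate hyperplane), hence is already saturated. The substance is the reverse inclusion $I_{d,n} \subseteq J_{d,n}^{\mathrm{sat}}$. To this end I would introduce the colon ideal
\[ Q := J_{d,n} : I_{d,n} = \{\, h \in \fk[a_{i,j}] : h \cdot I_{d,n} \subseteq J_{d,n} \,\} \]
and aim to show $V(Q) \subseteq \{0\}$ in $\mathbb{A}^{n(d+1)}$. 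By the Nullstellensatz this gives $m_{d,n}^N \subseteq Q$ for some $N$, hence $m_{d,n}^N I_{d,n} \subseteq J_{d,n}$, i.e.\ $I_{d,n} \subseteq J_{d,n}^{\mathrm{sat}}$.

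The first step is a block-triangular identity for $M_d$. Because a row of shift $s \leq k$ has zero entries in columns $j \geq d+k$, the matrix $M_d$ decomposes as
\[
  M_d = \begin{pmatrix} M_k & 0 \\ \ast & B_k \end{pmatrix},
\]
with $B_k$ the $n(d-k) \times (d-k)$ bottom-right block. For each $i \in \{1, \dots, n\}$, I would choose $R_R = \{(k+1, i), (k+2, i), \dots, (d, i)\}$; a direct check shows that $B_k|_{R_R}$ is then lower triangular with diagonal entries all equal to $a_{i,d}$. Block triangularity yields: for any $(d+k) \times (d+k)$ minor of $M_k$ on rows $R_L$, the $2d \times 2d$ minor of $M_d$ on $R_L \cup R_R$ equals $\pm\, a_{i,d}^{d-k}$ times that minor of $M_k$. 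Since the former lies in $J_{d,n}$ while the latter minors, as $R_L$ and $k$ vary, generate $I_{d,n}$ by Theorem~\ref{thm:minors}, this shows $a_{i,d}^{d-1} \in Q$ for every $i$.

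The second step uses $\mathrm{GL}_2$-equivariance. Both $I_{d,n}$ and $J_{d,n}$ are invariant under the natural $\mathrm{GL}_2$-action on the space of binary forms of degree $d$ (the latter because $J_{d,n}$ is a Fitting ideal of the intrinsic cokernel of polynomial multiplication), and hence so is $Q$. The $\mathrm{GL}_2$-orbit of the linear form $a_{i,d}$ in the dual representation $V_d^\ast$ sweeps out the rational normal curve of degree $d$, whose linear span is the entire $(d+1)$-dimensional space $\{a_{i,0}, \dots, a_{i,d}\}$. Therefore
\[
  V(Q) \subseteq \bigcap_{A \in \mathrm{GL}_2} V(A \cdot a_{i,d}) = V(a_{i,0}, a_{i,1}, \dots, a_{i,d})
\]
for each $i$; intersecting over all $i$ forces $V(Q) \subseteq \{0\}$.

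The main obstacle is this final passage from the extremal variable $a_{i,d}$ to all variables. The block-triangular identity on its own directly supplies multipliers only at $a_{i,d}$ (and, via the involution $a_{i,j} \leftrightarrow a_{i,d-j}$, at $a_{i,0}$), but it does not reach the middle variables $a_{i,j}$ with $0 < j < d$. The $\mathrm{GL}_2$-invariance of $Q$ bypasses this obstacle by simultaneously distributing the multiplier $a_{i,d}^{d-1}$ across every linear form in the orbit, bringing in the middle variables for free.
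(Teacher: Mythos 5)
Your proof is correct, and it reaches the conclusion by a somewhat different assembly than the paper, although the two key ingredients are the same. The computational heart coincides: you border a maximal minor of $M_k$ (sitting as the top-left block of $M_d$) by $d-k$ rows of a single polynomial $g_i$ whose restriction to the last $d-k$ columns is triangular, so that a $2d\times 2d$ minor of $M_d$ equals $\pm a_{i,d}^{d-k}$ times the chosen minor of $M_k$; the paper does the mirror-image version of exactly this, realizing $M_k$ in the bottom-right corner and bordering with rows of $g_1$ to get an upper triangular block with $a_{1,0}$ on the diagonal. Likewise, both arguments invoke $\mathrm{GL}_2$-equivariance of $J_{d,n}$ via its Fitting-ideal description and rely on Theorem~\ref{thm:minors} for the generators of $I_{d,n}$. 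Where you diverge is the logical packaging: the paper uses $\mathrm{GL}_2$ only to move an arbitrary point into the chart $a_{1,0}\neq 1$, er, $a_{1,0}\neq 0$, and then checks that the two ideals literally coincide after setting $a_{1,0}=1$, giving scheme equality chart by chart; you instead work globally with the colon ideal $Q=J_{d,n}:I_{d,n}$, showing $a_{i,d}^{d-1}\in Q$, then using $\mathrm{GL}_2$-stability of $Q$ and the fact that the orbit of the evaluation functional $a_{i,d}$ spans all linear forms in $a_{i,0},\dots,a_{i,d}$ to force $V(Q)\subseteq\{0\}$, and concluding $m_{d,n}^N I_{d,n}\subseteq J_{d,n}$ by the Nullstellensatz. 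Your route buys explicit saturation multipliers (concrete powers $a_{i,d}^{d-k}$, and more generally $\ell^{d-1}$ for every evaluation form $\ell$) and avoids any chart-by-chart scheme comparison, at the mild cost of invoking the Nullstellensatz and the spanning property of the rational normal curve; the paper's localization argument is more elementary in that respect and exhibits the equality of the dehomogenized ideals directly. All the supporting details you use (saturatedness of the prime $I_{d,n}$, $\mathrm{GL}_2$-stability of the colon ideal, the triangular shape of $B_k|_{R_R}$) check out.
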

\begin{proof}
    The equivalence of the two claimed statements is well known~\cite[Ex.II.5.10(b)]{Hartshorne}. We prove that the two projective schemes are equal. For this we need to prove that for any point in the projective space there is an affine neighbourhood on which the two schemes are equal. 

    First we note that the group $GL(2)$ acts on the projective space $\PP^{n(d+1)-1}$ as the change of variables $x,y$. This action clearly preserves $I_{d,n}$, as we know that this is the prime ideal of the locus when the forms have a common root, and this condition does not depend on the choice of coordinates.
    In fact, $GL(2)$ also acts on $J_{d,n}$, which can be seen through the intrinsic description of $J_{d,n}$ as a Fitting ideal. %Alternatively, one may simply notice that the action of $GL(2)$ is simply linear action on the columns of $M_k$ for any $k$ (in particular, $M_d$) and thus does not change the ideal of maximal minors. 

    Pick any point $p\in \PP^{n(d+1)-1}$ corresponding to an $n$-tuple of degree $d$ polynomials. One of those polynomials must be nonzero and without loss of generality we assume it is the first one. Note that in this case, we may act with an element of $GL(2)$ so that $a_{1,0}\neq 0$. Thus to compare the projective schemes defined by $J_{d,n}$ and $I_{d,n}$ it is enough to compare them on the affine chart $a_{1,0}=1$. Thus we have to prove that the two ideals are equal after we substitute $a_{1,0}=1$. By definition, $J_{d,n}\subset I_{d,n}$. Pick any generator of $I_{d,n}$, that is a maximal minor of some matrix $M_k$. Note that $M_k$ may be realized as a submatrix $M_k'$ of $M_d$ in the last $(d+k)$ columns and last $nk$ rows. Adding rows $1,n+1, 2n+1,\dots, (d-k-1)n+1$ to those of $M_k'$ and considering all columns, we obtain a submatrix of $M_d$ with maximal minors equal to maximal minors of $M_k$ after we substitute $a_{1,0}=1$. Indeed, the chosen submatrix on first $(d-k)$ columns is upper triangular with $a_{1,0}$ on its diagonal.

    This shows that the two ideals are equal after we substitute $a_{1,0}=1$, and thus finishes the proof of the proposition.
\end{proof}

We finally note that one of the main steps of the proof was finding a square-free Gr\"obner basis of $I_{d,n}$. There exist other term orders that provide square-free initial ideals, which however do not choose the diagonal as the leading term.
\begin{example} Consider the first non-trivial case $d=2, n=3$. We calculate minors and leading terms in
\texttt{degrevlex} polynomial ordering using Macaulay2~\cite{M2}. 
\begin{verbatim}
R = QQ[a_1..a_3,b_1..b_3,c_1..c_3];
N = matrix {
{a_1,b_1,c_1,0},
{a_2,b_2,c_2,0},
{a_3,b_3,c_3,0},
{0,a_1,b_1,c_1},
{0,a_2,b_2,c_2},
{0,a_3,b_3,c_3}};

M = matrix {
{a_1,b_1,c_1},
{a_2,b_2,c_2},
{a_3,b_3,c_3}};

U = (minors(3,M) + minors(4,N))
LU = leadTerm U
\end{verbatim}

The output is
\[a_3b_2c_1, a_3b_2b_3c_2, a_3b_1b_3c_2, a_2b_1b_3c_2, a_3b_1b_3c_1, a_2b_1b_3c_1, a_2b_1b_2c_1\]
%Elimination gives the same ideal (the output is \texttt{true}):
%\begin{verbatim}
%I = eliminate(x, ideal(a_1*x^2 + b_1*x + c_1,
%                       a_2*x^2 + b_2*x + c_2,
%                       a_3*x^2 + b_3*x + c_3));
%I == U
%\end{verbatim}
See~\cite{tropical}*{Ex.\;6.6} for a different analysis of this example. 
\end{example}
\begin{bibdiv}
\begin{biblist}
\bib{tropical}{article}{
  author = {Dickenstein, Alicia},
  author = {Feichtner, Eva Maria},
  author = {Sturmfels, Bernd},
   title = {Tropical discriminants},
 journal = {J.~Amer.\ Math.\ Soc.},
  volume = {20},
    year = {2007},
   pages = {1111--1133}
}
\bib{3264}{book}{
    title = {3264 and all that: A second course in algebraic geometry},
   author = {Eisenbud, David},
   author = {Harris, Joe},
     year = {2016},
publisher = {Cambridge University Press}
}
\bib{M2}{misc}{
  author = {Grayson, Daniel R.},
  author = {Stillman, Michael E.},
   title = {Macaulay2, a software system for research in algebraic geometry},
    note = {Available at \url{http://www.math.uiuc.edu/Macaulay2/}}
}
\bib{Hartshorne}{book}{
  title={Algebraic geometry},
  author={Hartshorne, Robin},
  series={GTM 52},
  year={2013},
  publisher={Springer}
}
\bib{jouanolou}{article}{
  title={Id{\'e}aux r{\'e}sultants},
  author={Jouanolou, Jean-Pierre},
  journal={Advances in Mathematics},
  volume={37},
%  number={3},
  pages={212--238},
  year={1980},
  publisher={Academic Press}
}
\bib{kakie}{article}{
  author = {Kaki\'{e}, Kunio},
   title = {The resultant of several homogeneous polynomials in two indeterminates},
 journal = {Proc.~Amer.\ Math.\ Soc.},
  volume = {54},
    year = {1976},
   pages = {1--7}
}
\bib{Lub}{article}{
    AUTHOR = {Lyubeznik, Gennady},
     TITLE = {Minimal resultant systems},
   JOURNAL = {J. Algebra},
    VOLUME = {177},
      YEAR = {1995},
     PAGES = {612--616}
 }
\bib{MS}{book}{
   title = {Invitation to nonlinear algebra},
  author = {Micha{\l}ek, Mateusz},
  author = {Sturmfels, Bernd},
  series = {GSM 211},
    year = {2021},
 publisher = {American Mathematical Society}
}
\bib{orsinger}{article}{
  title={Zur Konstruktion von Tr{\"a}gheitsformen als Koeffizienten algebraischer Glei\-chun\-gen},
  author={Orsinger, Heinz},
  journal={Mathematische Nachrichten},
  volume={5},
%  number={6},
  pages={355--370},
  year={1951},
  publisher={Wiley Online Library}
}
\bib{MOQ}{misc}{
   title = {Compatibility conditions for quadratic equations},
  author = {Schindler, Michael},
    note = {\mbox{\url{https://mathoverflow.net/q/438667}}},
     url = {https://mathoverflow.net/q/438667},
    date = {16~January~2023}
}
\bib{vdWaerden}{book}{
  title={Algebra II},
  author={van der Waerden, Bartel Leendert},
  %author={Artin, Emil},
  %author={Noether, Emmy},
  note = {Based in part on lectures of Emil Artin and Emmy Noether},
  edition={6th edition},
  year={1993},
  publisher={Springer}
}
\end{biblist}
\end{bibdiv}
\end{document}